\documentclass[oneside, reqno]{amsart}

\usepackage{amssymb}

\usepackage{graphicx}

\usepackage{tikz}

\usepackage{cleveref}

\newtheorem{theorem}{Theorem}[section]
\newtheorem{lemma}[theorem]{Lemma}

\newtheorem*{thmarbitrary_values}{Theorem~\ref{thm:arbitrary_values}} 
\newtheorem*{thmalmost_all}{Theorem~\ref{thm:almost_all}} 

\theoremstyle{definition}
\newtheorem{definition}[theorem]{Definition}
\newtheorem{example}[theorem]{Example}

\theoremstyle{remark}
\newtheorem{remark}[theorem]{Remark}

\numberwithin{equation}{section}

\newcommand{\demph}[1]{\emph{#1}}
\newcommand{\Mag}{\mathrm{Mag}}
\newcommand{\Met}{\operatorname{Met}}
\newcommand{\sumofentry}{\operatorname{\mathbf{sum}}}

\renewcommand{\d}{\mathrm{d}}
\renewcommand{\vec}{\mathbf}
\renewcommand{\th}{\text{th}}

\begin{document}

\title{The small-scale limit of magnitude and the one-point property}

\author{Emily Roff}
\address{School of Mathematics, University of Edinburgh, Edinburgh EH9 3FD, Scotland}
\email{emily.roff@ed.ac.uk}

\author{Masahiko Yoshinaga}
\address{Department of Mathematics, Osaka University, Toyonaka, Osaka 560-0043, Japan}
\email{yoshinaga@math.sci.osaka-u.ac.jp}

\subjclass[2010]{Primary 51F99; Secondary 05C50}

\date{\today}

\begin{abstract}
The magnitude of a metric space is a real-valued function whose parameter controls the scale of the metric. A metric space is said to have the \emph{one-point property} if its magnitude converges to 1 as the space is scaled down to a point. Not every finite metric space has the one-point property: to date, exactly one example has been found of a finite space for which the property fails. Understanding the failure of the one-point property is of interest in clarifying the interpretation of magnitude and its stability with respect to the Gromov--Hausdorff topology. We prove that the one-point property holds generically for finite metric spaces, but that when it fails, the failure can be arbitrarily bad: the small-scale limit of magnitude can take arbitrary real values greater than 1.
\end{abstract}

\maketitle

\section{Introduction}

Magnitude is an invariant of enriched categories, analogous in a precise sense to Euler characteristic \cite{LeinsterMagnitude2008, LeinsterMagnitude2013}. Every metric space can be regarded as a category enriched in the poset of nonnegative real numbers \cite{LawvereMetric1974}, so magnitude can be interpreted for metric spaces, and in this setting a rich theory has been developed. The magnitude of a metric space defines a partial function \([0, \infty) \to \mathbb{R}\) whose parameter controls the scale of the metric---or, if you prefer, the viewpoint of an observer. The large-scale asymptotics of the magnitude function are well studied, but its behaviour at small scales---what it sees from far away---remains mysterious. This paper investigates that mystery.

Concretely, given a finite metric space \(X\), denote by \(Z_X\) the \(X \times X\) matrix with entries $Z_X(x,y)=e^{-d(x, y)}$. If \(Z_X\) is invertible, the \demph{magnitude} \(|X|\) is defined to be the sum of the entries in the matrix \(Z_X^{-1}\). Now, for each \(t \in [0, \infty)\), let $tX$ be the metric space with underlying set \(X\), in which the distance from \(x\) to \(y\) is $t \cdot d(x, y)$. The \demph{magnitude function} of \((X,d)\) maps \(t\) to \(|tX|\) whenever \(Z_{tX}\) is invertible.

In general, \(|tX|\) is defined for all but finitely many values of \(t\) (see \Cref{rmk:mag_formal_mag}). If the space \(X\) happens to be of \emph{negative type}---equivalently, if the matrix \(Z_{tX}\) is positive definite for every \(t > 0\) \cite[Theorem 3.3]{MeckesPositive2013}---then \(|tX|\) is defined for every \(t\) and satisfies \(|tY| \leq |tX|\) for every \(Y \subseteq X\). Magnitude can thus be extended from finite to compact metric spaces of negative type by defining
\[|tX| = \sup\{|tY| \mid Y \subseteq X \text{ is finite}\}.\]
We will mainly be concerned with the magnitude of finite spaces, not necessarily of negative type.

Magnitude is so-named for a striking series of connections to notions of size and dimension. For any fixed choice of the parameter, magnitude behaves formally like the cardinality of sets: it is multiplicative with respect to \(\ell_1\)-products and satisfies an inclusion-exclusion formula \cite[\S 2]{LeinsterMagnitude2013}. Indeed, as \(t \to \infty\) the magnitude function of a finite space converges to the cardinality of the underlying set \cite[Prop.~2.2.6 (v)]{LeinsterMagnitude2013}. The function as a whole, however, is sensitive not only to the number of points in a space but to the distances between them.

\begin{figure}
\centering
\includegraphics[width=0.8\textwidth]{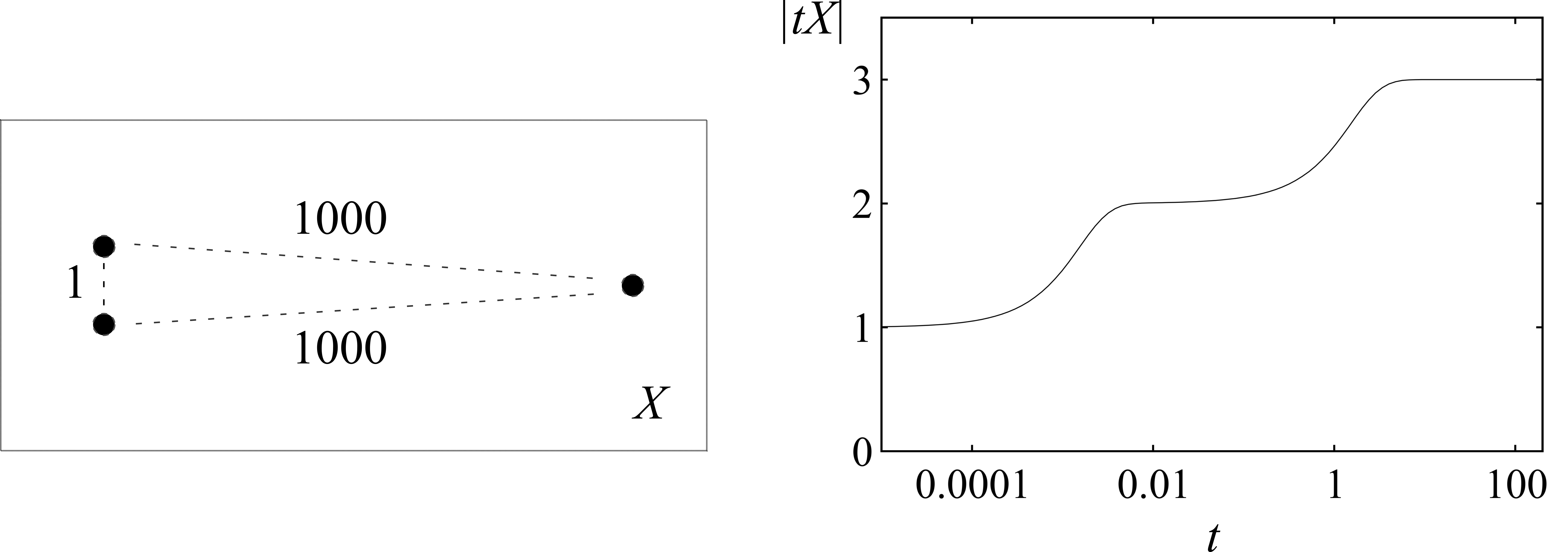}
\caption{The magnitude function of a three-point space (note the logarithmic scale). When \(t\) is very small, \(|tX|\) is close to 1, as though recognizing that `from far away' \(X\) looks like a one-point space. As \(t\) increases, the cluster of two points on the left in \(X\) becomes distinguishable from the point on the right and for a while \(|tX|\) lingers close to 2. As \(t \to \infty\), the magnitude function converges to the cardinality of \(X\): `from close up' it is clear that \(X\) is a three-point space. This is Example 6.4.6 in \cite{LeinsterEntropy2021}, originally due to Willerton.}
\label{fig:3pt}
\end{figure}

The example in \Cref{fig:3pt} illustrates typical behaviour of the magnitude function in a finite setting. Thanks to examples such as this one, the magnitude of a finite metric space is interpreted as measuring the `effective number of points' in the space as the scale (or the viewpoint) varies. Closely related to the magnitude function is the \emph{spread} of a metric space, whose instantaneous rate of growth can be interpreted, similarly, as measuring `effective dimension' \cite[\S 4]{WillertonSpread2015}.

For compact metric spaces of negative type, magnitude turns out to capture information about many more classical size-related features. In recent years, the large-scale asymptotics of the magnitude function have been the primary focus of attention. For example, for a compact subset \(X\) of Euclidean space the large-\(t\) asymptotics of \(|tX|\) have been shown to determine the volume of \(X\) and its Minkowski dimension (\cite[Theorem 1]{BarceloCarbery2018} and \cite[Cor.~7.4]{MeckesMagnitude2015}). Under additional conditions they also record the surface area, total mean curvature, and the Willmore energy (\cite[Theorem 2(d)]{GimperleinGoffeng} and \cite[Theorem 2]{GimperleinGoffengWillmore}).

By contrast, little is known---even in the finite setting---about the behaviour of the magnitude function at small scales. At the heart of the mystery is the so-called `one-point property'.

\subsection*{The one-point property} As one would hope, the magnitude of a one-point space is 1. Let \(X\) be a metric space which is compact and negative type, or finite. We say that \(X\) has the \demph{one-point property} if
\[\lim_{t \to 0} |tX| = 1.\]
Given the interpretation of magnitude as a scale-dependent measure of the effective number of points in a space, one might expect that \emph{every} compact metric space has the one-point property: that viewed from very far away, every such space is `effectively' a single point. However, this is not quite the case.

Indeed, Leinster and Meckes have recently exhibited a class of compact spaces \(X\) such that \(|tX| = \infty\) for every \(t > 0\) \cite[Theorem 2.1]{LeinsterMeckes2021}. And even for a finite space---which necessarily satisfies \(|tX| < \infty\) for all sufficiently small \(t > 0\)---the one-point property can fail. To date, essentially just one example has been found of a finite metric space without the one-point property (Example 2.2.8 in \cite{LeinsterMagnitude2013}, due to Willerton; see \Cref{eg:willerton}). It is a six-point space of negative type, satisfying
\[\lim_{t \to 0} |tX| = 6/5.\]
Other examples can be constructed from this one by taking \(\ell_1\)-powers, which yields, for each \(n \in \mathbb{N}\), a space \(X^n\) such that \(\lim_{t \to 0} |tX^n| = (6/5)^n\).

This failure of the one-point property troubles the interpretation of magnitude as `effective number of points'. It is also one of just a handful of known discontinuities of magnitude, regarded as a function on the Gromov--Hausdorff space of compact metric spaces of negative type (see \cite[Prop.~3.1]{LeinsterMagnitude2017b} and \cite[Example 2.3]{GimperleinGoffengLouca2022}). Thus, it is of both conceptual and practical importance to understand this apparently pathological behaviour.

Three natural questions present themselves:
\begin{enumerate}
    \item How \emph{commonly} does the one-point property fail?
    \item How \emph{soon} does the one-point property fail? In other words, what is the smallest set that carries a metric for which the property fails?
    \item How \emph{badly} can the one-point property fail? What values can the small-scale limit of the magnitude function take?
\end{enumerate}
This paper gives answers to all three questions in the setting of finite metric spaces.

\subsection*{Summary of results}

The strongest prior result concerning the one-point property is due to Leinster and Meckes \cite[Theorem 3.1]{LeinsterMeckes2021}. Let \(L_1[0,1]\) denote the Banach space of measurable functions \(f : [0,1] \to \mathbb{R}\) whose integral 1-norm  \(\int |f|\) is finite, with the metric induced by the 1-norm. Leinster and Meckes prove that every nonempty compact subset of a finite-dimensional subspace of \(L_1[0,1]\) has the one-point property. This implies in particular that the one-point property holds for every compact subset of \(\ell_1^n\) or \(\mathbb{R}^n\) with the Euclidean metric.

For various other classes of finite spaces the one-point property is guaranteed to hold by virtue of known formulae for magnitude: these include all finite complete graphs, complete bipartite graphs, cycles and Cayley graphs. In \Cref{sec:almost_all} we record these examples, before proving---in answer to question (1)---that in fact a \emph{generic} finite metric space has the one-point property:

\begin{thmalmost_all}
The space of all \(n\)-point metric spaces contains a dense open subset on which the one-point property holds.
\end{thmalmost_all}

In \Cref{sec:any_value} we turn to questions (2) and (3). Since every metric space with at most four points embeds isometrically into \(\ell_1^2\) \cite[Theorem 3.6 (4)]{MeckesPositive2013}, Leinster and Meckes's result implies that every space with at most four points has the one-point property. Thus, question (2) comes down to asking whether there exists a five-point space without the one-point property. In \Cref{eg:5_pt} we exhibit one.

Finally, in answer to question (3), we show that, although the one-point property almost never fails, the failure can be arbitrarily bad, in the sense that the small-scale limit of magnitude can take arbitrary real values greater than 1. That is:

\begin{thmarbitrary_values}
    For every real number \(R \geq 1\) there exists a finite metric space \(X\) such that \(\lim_{t \to 0} |tX| = R\).
\end{thmarbitrary_values}

Restricting attention to finite metric spaces allows us to employ elementary methods throughout. At least three fundamental questions are left open:
\begin{enumerate}
\item
For a nonempty metric space \(X\) of negative type, the monotonicity of magnitude with respect to inclusion implies that \(\lim_{t \to 0} |tX| \geq 1\). In general, though, magnitude can take values below 1 \cite[Example 2.2.7]{LeinsterMagnitude2013}. Does there exist a metric space \(X\) such that \(\lim_{t \to 0} |tX| < 1\)?
\item
For a finite metric space \(X\), can \(\lim_{t \to 0} |tX|\) be bounded in terms of the cardinality of \(X\)?
\item
Does there exist a finite metric space \(X\) with \(\lim_{t \to 0} |tX| = \infty\)?
\end{enumerate}

\subsection*{Acknowledgements}

This work was partially supported by JSPS Postdoctoral Fellowships for Research in Japan and JSPS KAKENHI JP22K18668. We are grateful to Jun O'Hara for useful comments, and to the anonymous referee for several very helpful remarks, including the observation that taking \(\ell_1\)-powers of Willerton's example generates a family of spaces whose small-scale magnitude can be arbitrarily large.


\section{A generic finite metric space has the one-point property}
\label{sec:almost_all}

We begin by describing an alternative formulation for magnitude, introduced by Leinster to study the magnitude of graphs \cite[\S 2]{LeinsterMagnitude2019} and later employed in the construction of \demph{magnitude homology}, a homology theory for metric spaces designed to categorify their magnitude \cite[Example 2.5]{LeinsterMagnitude2017v3}. Here, and throughout, when we refer to the magnitude of a graph we mean the magnitude of the set of vertices equipped with the shortest path metric.

Let $\mathbb{Z}[q^{\mathbb{R}_{\geq 0}}]$ denote the ring of \emph{generalized polynomials}: finite sums of the form 
\[
a_1q^{r_1}+a_2q^{r_2}+\cdots+a_nq^{r_n}
\]
for some $a_i \in \mathbb{Z}$ and $r_i\in [0, \infty)$, with multiplication determined by \(q^{r_1} \cdot q^{r_2} = q^{r_1 + r_2}\). This is an integral domain, and we denote its field of fractions---the field of \demph{generalized rational functions}---by $\mathbb{Q}(q^{\mathbb{R}})$.

Given a finite metric space \(X\), let \(Z_X^{\mathrm{form}}\) denote the \(X \times X\) matrix with entries \(Z_X^{\mathrm{form}}(x,y) = q^{d(x,y)}\). The determinant of this matrix is a generalized polynomial with constant term 1, so is invertible in \(\mathbb{Q}(q^\mathbb{R})\). The \emph{formal magnitude} \(\Mag(X)\) is defined to be the sum of entries in the matrix \((Z_X^{\mathrm{form}})^{-1}\).

\begin{remark}\label{rmk:mag_formal_mag}
    The magnitude function of a finite metric space \(X\) can be recovered from the formal magnitude by 
    \[
    |tX|=\Mag(X)(e^{-t})
    \]
    (see \cite[Example 2.5]{LeinsterMagnitude2017v3}), and the space \(X\) has the one-point property if and only if
    \[\lim_{q \to 1} \Mag(X)(q) = 1.\]
    Since the determinant of \(Z_X^{\mathrm{form}}\) is a generalized polynomial with constant term 1, it vanishes for at most finitely many values of \(q\), and certainly not at \(q=0\). It follows that \(\det(Z_{tX}) = \det(Z_X^{\mathrm{form}})|_{q = e^{-t}}\) is nonzero---thus, \(|tX|\) is defined---for all but finitely many \(t\), including for all sufficiently large \(t\) and sufficiently small \(t > 0\).
\end{remark}

\begin{example}\label{eg:one-pt_examples}
For various classes of spaces, the one-point property is guaranteed by known results.
    \begin{enumerate}
        \item Every finite metric space \(X\) that can be embedded isometrically into Euclidean space of some finite dimension has the one-point property by Leinster and Meckes's result \cite[Theorem 3.1]{LeinsterMeckes2021}. Schoenberg's Criterion \cite[Theorem 1]{Schoenberg1935} tells us that such an embedding exists if and only if the matrix
        \(D = \left(d(x,x')^2\right)_{x,x' \in X}\)
        is conditionally negative semidefinite.
        
        \item \label{eg:trees} Trees which are not path graphs cannot be embedded into Euclidean space---but every finite tree can be embedded into \(\ell_1^n\) for some \(n \in \mathbb{N}\). Thus, Leinster and Meckes's result guarantees that every finite tree has the one-point property. This can also be seen directly from Example 4.12 in \cite{LeinsterMagnitude2019}, which says that the formal magnitude of a forest \(F\) is given by the formula
        \[\Mag(F)(q) = \#\{\text{components of \(F\)}\} + \#\{\text{edges in \(F\)}\} \frac{1-q}{1+q}.\]
        (Here, magnitude has been extended from classical metric spaces to those with distances valued in \([0,\infty]\), using the convention that \(q^{\infty} = 0\).) It follows, via \Cref{rmk:mag_formal_mag}, that
        \[\lim_{t \to 0} |tF| = \lim_{q \to 1} \Mag(F)(q) = \#\{\text{components of \(F\)}\}\]
        and in particular that if $F$ is a tree, then it has the one-point property. 
        
        \item \label{eg:bipartite} Example 3.4 in \cite{LeinsterMagnitude2019} says that the formal magnitude of the complete bipartite graph \(K_{n,m}\) is given by the formula
        \[\Mag(K_{n,m})(q) = \frac{(m+n) - (2mn - m - n)q}{(1+q)(1-(m-1)(n-1)q^2)}.\]
        It follows that every complete bipartite graph has the one-point property.
        
        \item \label{eg:homogeneous} A metric space is called \demph{homogeneous} if its group of isometries acts transitively. Given a finite homogeneous metric space $X$ and a fixed point \(x \in X\), define the generalized polynomial
        \[N_X(q) = \frac{1}{\# X} \sum_{x' \in X} q^{d(x,x')}.\]
        The homogeneity of \(X\) implies that \(N_X(q)\) does not depend on the choice of \(x\). Speyer's formula for the magnitude of a finite homogeneous space \cite[Prop.~2.1.5]{LeinsterMagnitude2013} tells us that
        \[\Mag(X)(q)=\frac{1}{N_X(q)}.\]
        This formula implies the one-point property for all such spaces, including all vertex-transitive graphs. It follows that the property holds for every finite complete graph, every finite cycle, and every finite Cayley graph.
    \end{enumerate}
\end{example}

In fact, \textit{almost every} finite metric space has the one-point property. To make this statement precise, we first consider \(n\)-point metric spaces equipped with an ordering on their points, and call two such spaces isomorphic if there exists an order-preserving isometry between them. Each ordered metric space $X=(\{x_1, \dots, x_n\}, d)$ determines a \emph{distance matrix} $\mathbf{d} = (d(x_i, x_j))_{i, j=1, \dots, n}$; conversely, the distance matrix determines the isomorphism class of $X$. Thus the isomorphism classes of ordered \(n\)-point metric spaces are parametrized by the set 
\[
\label{eq:matrix}
\Met_n=
\left\{
(d_{ij})_{i, j=1, \dots, n}
\left|\ 
\begin{aligned}
    &d_{ii}=0\ (1\leq i\leq n)\\
&d_{ij}=d_{ji}>0\ (1\leq i<j\leq n)\\
&d_{ij}+d_{jk}\geq d_{ik}\ (1\leq i, j, k\leq n)
\end{aligned}
\right.
\right\}.
\]
This is a $\binom{n}{2}$-dimensional convex cone in Euclidean space (neither open nor closed).

Two spaces $X=\{x_1, \dots, x_n\}$ and $Y=\{y_1, \dots, y_n\}$ are isometric if and only if there exists a permutation $\sigma\in\mathfrak{S}_n$ such that $d_X(x_{\sigma(i)}, x_{\sigma(j)})=d_Y(y_i, y_j)$. Thus the set of isometry classes of (unordered) $n$-point metric spaces can be identified with the quotient set $\Met_n/\mathfrak{S}_n$. We equip this set with the quotient topology, which is equivalent to the Gromov--Hausdorff topology \cite[Definition 5.33]{BridsonHaefliger}, and call this \demph{the space of \(n\)-point metric spaces}.

\begin{theorem}\label{thm:almost_all}
    The space of \(n\)-point metric spaces contains a dense open subset on which the one-point property holds.
\end{theorem}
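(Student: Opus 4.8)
The plan is to reduce the one-point property to the behaviour of \(\Mag(X)\) at \(q=1\), as in \Cref{rmk:mag_formal_mag}, and then to treat that limit as a singular-perturbation problem. Write \(\mathbf{1}\) for the all-ones vector, \(J=\mathbf{1}\mathbf{1}^{T}\) for the all-ones matrix, and recall that \(\Mag(X)(q)=\mathbf{1}^{T}(Z_X^{\mathrm{form}})^{-1}\mathbf{1}\). Since \(q^{d(x,y)}=e^{d(x,y)\log q}\) is real-analytic in \(q\) near \(q=1\), I would expand
\[
Z_X^{\mathrm{form}} = J + (q-1)\,\mathbf{d} + O\big((q-1)^{2}\big),
\]
where \(\mathbf{d}\) is the distance matrix. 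For \(n\geq 2\) the matrix \(J\) is singular—all \(n-1\) eigenvalues transverse to \(\mathbf{1}\) degenerate as \(q\to 1\)—so the limit \(\lim_{q\to1}\Mag(X)(q)\) is a genuine \(0/0\) indeterminacy, and the task is to extract its value.

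Next I would split \(\mathbb{R}^{n}=\mathrm{span}(\mathbf{1})\oplus\mathbf{1}^{\perp}\) with orthogonal projections \(P,Q\). Because \(QJ=JQ=0\), the expansion forces the block \(Q Z_X^{\mathrm{form}} Q=(q-1)\,Q\mathbf{d}Q+O((q-1)^{2})\) and the vector \(Q Z_X^{\mathrm{form}} u\) (with \(u=\mathbf{1}/\sqrt{n}\)) to be of order \(q-1\), while \(u^{T}Z_X^{\mathrm{form}}u\to n\). A Schur-complement computation, pivoting on the \(Q\)-block, then yields
\[
\Mag(X)(q)=n\Big(u^{T}Z_X^{\mathrm{form}}u-(Q Z_X^{\mathrm{form}} u)^{T}(Q Z_X^{\mathrm{form}} Q)^{-1}(Q Z_X^{\mathrm{form}} u)\Big)^{-1},\quad u=\tfrac{\mathbf{1}}{\sqrt{n}},
\]
the inverse being taken on \(\mathbf{1}^{\perp}\). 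Provided the compression \(Q\mathbf{d}Q\) of the distance matrix to \(\mathbf{1}^{\perp}\) is invertible, the factor \((q-1)\) in \(Q Z_X^{\mathrm{form}} Q\) cancels against the two \((q-1)\)-order vectors, so the subtracted term is \(O(q-1)\); the bracket tends to \(n\), and hence \(\Mag(X)(q)\to 1\). Thus \emph{invertibility of \(Q\mathbf{d}Q\) on \(\mathbf{1}^{\perp}\) is a sufficient condition for the one-point property}.

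I would then recast this condition as the nonvanishing of a single polynomial. A standard bordered-system argument shows that \(Q\mathbf{d}Q\) is invertible on \(\mathbf{1}^{\perp}\) if and only if
\[
\det\begin{pmatrix} \mathbf{d} & \mathbf{1} \\ \mathbf{1}^{T} & 0 \end{pmatrix}\neq 0,
\]
and the left-hand side is a polynomial in the entries \(d_{ij}\). It is not identically zero: on the regular simplex \(\mathbf{d}=J-I\), one computes \(Q\mathbf{d}Q=-Q\), which is invertible on \(\mathbf{1}^{\perp}\), so the determinant is nonzero there. Hence its vanishing locus is a proper algebraic subset—closed and nowhere dense—of the full-dimensional cone \(\Met_n\) (any nonempty relatively open subset of a convex set with nonempty interior meets that interior, where a nonzero polynomial cannot vanish). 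Its complement \(U_n\) is therefore open and dense in \(\Met_n\). Finally \(U_n\) is \(\mathfrak{S}_n\)-invariant, since a permutation conjugates the bordered matrix by an orthogonal matrix and leaves its determinant unchanged; so the open quotient map \(\Met_n\to\Met_n/\mathfrak{S}_n\) carries \(U_n\) to a dense open subset of the space of \(n\)-point metric spaces, on which the one-point property holds.

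The main obstacle is the singular-perturbation step: because \(Z_X^{\mathrm{form}}\) collapses to the rank-one matrix \(J\) at \(q=1\), one must track the cancellation of the vanishing factors carefully enough to certify that the limit is \emph{exactly} \(1\) rather than merely finite, and it is precisely the invertibility of \(Q\mathbf{d}Q\) that guarantees this. A secondary point needing care is confirming that the bordered determinant is genuinely a nonzero polynomial and that this nonvanishing survives restriction to \(\Met_n\) and passage to the quotient. I would note that the condition is sufficient but not necessary—any finite space lacking the one-point property, such as the five- and six-point counterexamples, must lie on the excluded hypersurface \(\det(\cdots)=0\) where \(Q\mathbf{d}Q\) is singular—so the one-point set is genuinely larger than \(U_n\), which is consistent with every space of at most four points having the property.
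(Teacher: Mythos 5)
Your proof is correct, and it reaches the theorem by a different mechanism than the paper, though the two routes converge on the same algebraic condition. The paper expands \(\det(Z_{tX})\) and \(\sumofentry(\mathrm{adj}(Z_{tX}))\) as power series in \(t\), using multilinearity of the determinant in the columns to show that all terms below order \(t^{n-1}\) vanish and that both series share the leading coefficient \(F_n(d_{ij})=\sum_{j}\det(\mathbf{d}_1,\dots,\mathbf{1},\dots,\mathbf{d}_n)=\sumofentry(\mathrm{adj}(\mathbf{d}))\); when \(F_n\neq 0\) the ratio tends to \(1\). You instead handle the degeneration of \(Z_X^{\mathrm{form}}\) to the rank-one matrix \(J\) at \(q=1\) by a Schur complement on \(\mathbf{1}^{\perp}\), and your sufficient condition --- invertibility of \(Q\mathbf{d}Q\) on \(\mathbf{1}^{\perp}\), equivalently nonvanishing of the bordered determinant --- is the \emph{same} polynomial up to sign, since \(\det\bigl(\begin{smallmatrix}\mathbf{d}&\mathbf{1}\\ \mathbf{1}^{T}&0\end{smallmatrix}\bigr)=-\mathbf{1}^{T}\mathrm{adj}(\mathbf{d})\mathbf{1}=-F_n(d_{ij})\). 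Both arguments then use the same witness (all distances equal to \(1\); your computation \(Q\mathbf{d}Q=-Q\) is slicker than the paper's determinant evaluation) and the same descent along the open quotient map \(\Met_n\to\Met_n/\mathfrak{S}_n\); your explicit check of \(\mathfrak{S}_n\)-invariance and of density via convexity of the cone makes precise points the paper leaves implicit. What your route buys is a conceptual explanation of why the limit is exactly \(1\) rather than merely finite (the subtracted Schur term is \(O(q-1)\) because the off-diagonal blocks vanish to first order while the pivot block vanishes only to first order); what the paper's route buys is immediate access to the next-order coefficients \(C_n\) and \(C_n'\) (\Cref{rmk:Fn}), which it exploits in \Cref{sec:any_value} to evaluate the limit when \(F_n=0\). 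The only step I would ask you to spell out is that invertibility of \(Q\mathbf{d}Q\) on \(\mathbf{1}^{\perp}\) forces \(QZ_X^{\mathrm{form}}Q|_{\mathbf{1}^{\perp}}\) to be invertible for all \(q\neq 1\) sufficiently close to \(1\), with \((QZ_X^{\mathrm{form}}Q)^{-1}=O((q-1)^{-1})\); this is what licenses the block-inverse formula and the cancellation, and it is exactly where your hypothesis enters.
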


\begin{proof}
We will show that there exists a non-zero polynomial $F_n(d_{ij})$ on the cone $\Met_n$ such that if $F_n(d(x_i, x_j))\neq 0$, then $X= (\{x_1, \dots, x_n\},d)$ has the one-point property. It follows that the set of ordered \(n\)-point spaces with the one-point property contains a dense open subset of $\Met_n$, and since the quotient map $\Met_n\to\Met_n/\mathfrak{S}_n$ is surjective and an open map, it descends to $\Met_n/\mathfrak{S}_n$.

Let \(X\) be an \(n\)-point metric space. By taking \(t\) to be sufficiently small, we may assume that $\det (Z_{tX})\neq 0$ and \(|tX|\) is defined (\Cref{rmk:mag_formal_mag}). By Cramer's rule,
\begin{equation}
\label{eq:cofactor}
|tX| = \frac{\sumofentry (\mathrm{adj}(Z_{tX}))}{\det (Z_{tX})}
\end{equation}
where $\mathrm{adj}(Z_{tX})$ is the adjugate matrix of $Z_{tX}$ and \(\sumofentry(M)\) denotes the sum of all entries in a matrix \(M\).

We first consider the denominator of (\ref{eq:cofactor}). Let $\mathbf{d}_i=(d_{1i},d_{2i}, \dots, d_{ni})^{\top}$ be the \(i^\th\) column of the distance matrix $\mathbf{d}=(d_{ij})=(d_X(x_i, x_j))$, and for $k\geq 0$ let $\mathbf{d}_i^{\odot k}=(d_{1i}^k,d_{2i}^k, \dots, d_{ni}^k)^\top$ be the \(k^\th\) Hadamard power (component-wise power).
Note that 
$\mathbf{d}_i^{\odot 0}=(1, 1, \dots, 1)^\top$. 
Then the \(i^\th\) column of the matrix $Z_{tX}$ is 
$
\sum_{k=0}^\infty \frac{(-t)^k}{k!}\mathbf{d}_i^{\odot k}. 
$
Hence, we have 
\[
\label{eq:expand}
\begin{split}
\det (Z_{tX})&=
\sum_{k_1, \dots, k_n=0}^\infty \frac{(-t)^{k_1+\dots +k_n}}{k_1!\cdots k_n!}
\det
\begin{pmatrix}
\mathbf{d}_1^{\odot k_1}, 
\mathbf{d}_2^{\odot k_2}, 
\cdots, 
\mathbf{d}_n^{\odot k_n}
\end{pmatrix}
\\
&=
\sum_{p=0}^\infty 
\left(
\sum_{k_1+ \dots +k_n=p} \frac{1}{k_1!\cdots k_n!}
\det
\begin{pmatrix}
\mathbf{d}_1^{\odot k_1}, 
\mathbf{d}_2^{\odot k_2}, 
\cdots, 
\mathbf{d}_n^{\odot k_n}
\end{pmatrix}
\right)
(-t)^{p}.
\end{split}
\]
If $p=k_1+\dots+k_n\leq n-2$, at least two of $k_1, \dots, k_n$ are zero. 
In that case, the determinant is zero because there are 
at least two identical columns $(1, 1, \dots, 1)^\top$. 
Thus, the first potentially non-vanishing 
term is $(-t)^{n-1}$, with the coefficient 
\begin{equation}
\label{eq:initialcoeff}
\sum_{j=1}^n
\det
\begin{pmatrix}
\mathbf{d}_1^{\odot 1}, 
\cdots, 
\mathbf{d}_j^{\odot 0}, 
\cdots, 
\mathbf{d}_n^{\odot 1}
\end{pmatrix}
.
\end{equation}
Here, the \(j^\th\) matrix whose determinant appears in the sum is obtained by replacing the \(j^\th\) column in the distance matrix by the vector \((1, \ldots, 1)^{\top}\).
So \eqref{eq:initialcoeff} is a polynomial in the entries of the matrix \(\mathbf{d}\), which we denote by $F_n(d_{ij})$. 

Next we consider the numerator of (\ref{eq:cofactor}). In general, 
the sum of entries of the adjugate matrix of a square matrix $A=(a_{ij})_{i,j}$ is 
given by 
\[
\sum_{i=1}^n
\det
\begin{pmatrix}
a_{11}& \cdots& 1& \cdots & a_{1n}\\
\vdots&\ddots &\vdots& \ddots &\vdots\\
a_{n1}& \cdots &1& \cdots& a_{nn}
\end{pmatrix},
\]
where the \(i^\th\) matrix whose determinant appears in the formula is obtained by replacing the \(i^\th\) column in \(\mathrm{adj}(A)\) by the vector \((1, \ldots, 1)^{\top}\).
Hence,
$\sumofentry (\mathrm{adj}(Z_{tX}))$ is equal to 
\[
\sum_{i=1}^n
\sum_{k_1, \dots, k_{i-1}, k_{i+1}, \dots, k_n=0}^\infty \frac{(-t)^{k_1+\dots +k_n}}{k_1!\cdots k_n!}
\det
\begin{pmatrix}
\mathbf{d}_1^{\odot k_1}, 
\cdots, 
\mathbf{d}_i^{\odot 0}, 
\cdots, 
\mathbf{d}_n^{\odot k_n}
\end{pmatrix}. 
\]
Again the initial term is $t^{n-1}$ with coefficient equal to 
(\ref{eq:initialcoeff}). Thus we have the expression 
\begin{equation}
\label{expansion}
|tX|=\frac{F_n(d(x_i, x_j))(-t)^{n-1}+C_n (-t)^n+\dots}{F_n(d(x_i, x_j))(-t)^{n-1}+C'_n (-t)^n+\dots},
\end{equation}
from which we can see that if $F_n(d(x_i, x_j))\neq 0$, then $\lim_{t\to 0}|tX|=1$. 

It remains to prove that $F_n$ is not identically zero on the cone $\Met_n$, i.e.~that there exists a metric space $X$ with $F_n(d(x_i, x_j))\neq 0$. Let $X$ be the \(n\)-point metric space with $d(x_i, x_j)=1$ for $x_i\neq x_j$. Consider the expression \eqref{eq:initialcoeff} defining \(F_n\). 
Using 
\[
\det
\begin{pmatrix}
0&1&\dots &1&1\\
1&0&\dots &1&1\\
\vdots&\vdots&\cdots &\vdots&\vdots\\
1&1&\dots &1&1\\
\vdots&\vdots&\cdots &\vdots&\vdots\\
1&1&\dots &0&1\\
1&1&\dots &1&0
\end{pmatrix}^\top
=
\det
\begin{pmatrix}
-1&0&\dots &0&0\\
0&-1&\dots &0&0\\
\vdots&\vdots&\cdots &\vdots&\vdots\\
1&1&\dots &1&1\\
\vdots&\vdots&\cdots &\vdots&\vdots\\
0&0&\dots &-1&0\\
0&0&\dots &0&-1
\end{pmatrix}^\top
=(-1)^{n-1}, 
\]
we see that $F_n(d(x_i, x_j))=(-1)^{n-1}\cdot n\neq 0$. 
\end{proof}

\begin{remark}\label{rmk:Fn}
The proof of \Cref{thm:almost_all} shows that the relation $F_n(d(x_i,x_j))=0$ is necessary for the one-point property to fail. However, this relation does not guarantee failure: for instance, the cycle graph on four vertices satisfies $F_n(d(x_i,x_j))=0$ and has the one-point property. 

On the other hand, formula (\ref{expansion})
shows that when $F_n(d(x_i, x_j))=0$ and \(C_n' \neq 0\), the small-scale limit is
\[\lim_{t\to 0}|tX|=\frac{C_n}{C_n'}.\]
The coefficients $C_n$ and $C_n'$ in (\ref{expansion}) can be computed as
\[
\begin{split}
C_n&=
\frac{1}{2}\cdot
\sum_{\substack{i,j=1\\ i\neq j}}^n
\det
\begin{pmatrix}
\mathbf{d}_1^{\odot 1}, 
\cdots, 
\mathbf{d}_i^{\odot 0}, 
\cdots, 
\mathbf{d}_j^{\odot 2}, 
\cdots, 
\mathbf{d}_n^{\odot 1}
\end{pmatrix}, 
\\
C_n'&=C_n+\det(d(x_i, x_j)). 
\end{split}
\]
Thus, if $F_n(d(x_i, x_j))= 0$ and we also have $\det(d(x_i, x_j))\neq 0$, then the one-point property fails. In \Cref{sec:any_value} we will investigate this phenomenon more closely for a special class of metric spaces. 
\end{remark}


\section{The small-scale limit can take any real value greater than 1}
\label{sec:any_value}

Our aim in this section is to exhibit an infinite family of finite metric spaces for which the one-point property fails, and for which the value of the small-scale limit can be controlled. These spaces will be constructed by generalizing the only previously known example of a finite space without the one-point property. That example, due to Willerton, is the following.

\begin{example}
\label{eg:willerton}
(\cite[Example 2.2.8]{LeinsterMagnitude2013}) 
Let $X$ be the graph in \Cref{fig:Willerton}. Then \(\Mag_X(q) = \frac{6}{4q + 1}\) and so
\[\lim_{t\to 0}|tX| = \lim_{q \to 1} \Mag_X(q) = \frac{6}{5}.\]
\end{example}

\begin{figure}[htbp]
\centering
\begin{tikzpicture}

\coordinate (A1) at (0,2); 
\coordinate (A2) at (1.5,2); 
\coordinate (A3) at (3,2); 
\coordinate (B1) at (0,0); 
\coordinate (B2) at (1.5,-1); 
\coordinate (B3) at (3,0); 

\draw [very thick] (A1)--(B1)--(A2)--(B2)--(A1)--(B3)--(A2);
\draw [very thick] (B1)--(A3)--(B2)--(B1)--(B3)--(B2)--(B3)--(A3);

\filldraw[draw=black, fill=black] (A1) circle [radius=0.1]; 
\filldraw[draw=black, fill=black] (A2) circle [radius=0.1]; 
\filldraw[draw=black, fill=black] (A3) circle [radius=0.1]; 
\filldraw[draw=black, fill=black] (B1) circle [radius=0.1]; 
\filldraw[draw=black, fill=black] (B2) circle [radius=0.1]; 
\filldraw[draw=black, fill=black] (B3) circle [radius=0.1]; 

\end{tikzpicture}
\caption{A six-point space without the one-point property. Solid black lines represent distances equal to 1. The distance between each non-adjacent pair of vertices is $2$.}
\label{fig:Willerton}
\end{figure}
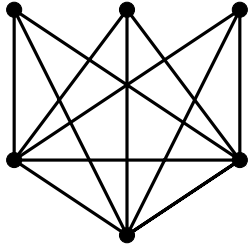

Recall that the \demph{join} of graphs \(G\) and \(H\) is the graph \(G \ast H\) obtained by first taking the disjoint union of \(G\) and \(H\), then adding an edge between each vertex of \(G\) and every vertex of \(H\). Willerton's example is the graph \(D_3 \ast K_3\), where \(D_3\) is the discrete graph on three vertices and \(K_3\) is the complete graph on three vertices. To construct our family of examples we will consider more general joins: not just of graphs, but of metric spaces.

The join of metric spaces extends the construction for graphs---see, for example, \cite[Section 8]{BeardonRV} for the general definition and discussion. For present purposes it suffices to consider joins of spaces of diameter at most 2, defined as follows.

\begin{definition}
    Let \((X,d_X)\) and \((Y,d_Y)\) be metric spaces of diameter at most 2. The \demph{join} of of \(X\) and \(Y\) is the metric space \((X \ast Y, d)\) with underlying set \(X \sqcup Y\) and distance function
    \[d(a,b) = \begin{cases} 
    d_X(a,b) & \text{if } a,b \in X \\
    d_Y(a,b) & \text{if } a,b \in Y \\
    1 & \text{otherwise}.
    \end{cases}\]
\end{definition}

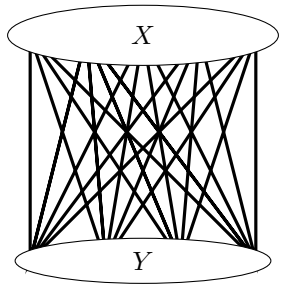
\begin{figure}[htbp]
\centering
\begin{tikzpicture}

\coordinate (A1) at (0,3); 
\coordinate (A2) at (0.75,3); 
\coordinate (A3) at (1.5,3); 
\coordinate (A4) at (2.25,3); 
\coordinate (A5) at (3,3); 
\coordinate (B1) at (0,0); 
\coordinate (B2) at (1,0); 
\coordinate (B3) at (2,0); 
\coordinate (B4) at (3,0); 

\draw [very thick] (B1)--(A1)--(B2)--(A2)--(B1)--(A3)--(B2)--(A2)--(B1)--(A4)--(B2)--(A5)--(B1);
\draw [very thick] (B3)--(A1)--(B4)--(A2)--(B3)--(A3)--(B4)--(A2)--(B3)--(A4)--(B4)--(A5)--(B3);

\filldraw[draw=black, fill=white] (1.5,0) node {$Y$} circle [x radius=1.7,y radius=0.3];
\filldraw[draw=black, fill=white] (1.5,3) node {$X$} circle [x radius=1.8,y radius=0.4];

\end{tikzpicture}
\caption{The join $X*Y$. Solid black lines represent distances equal to 1. In principle one can consider joining distances other than 1, but for us this would not offer extra generality, since we always consider \(X \ast Y\) along with its rescalings \(t(X \ast Y)\) for \(t \in [0, \infty)\).}
\label{fig:join}
\end{figure}

For example, let \(X_2^m\) denote the \(m\)-point space with all nonzero distances equal to 2; then \(X_2^m\) is homogeneous, and for each \(n,m \in \mathbb{N}\) the join \(X_2^m \ast X_2^n\) is isometric to the complete bipartite graph \(K_{m,n}\). In \Cref{eg:one-pt_examples} \eqref{eg:bipartite} we noted that there is a general formula for the magnitude of a complete bipartite graph, due to Leinster. The following theorem generalizes that formula to describe the magnitude of a join of two finite homogeneous spaces---it is essentially the same as \cite[Prop.~2.3.13]{LeinsterMagnitude2013}, but for completeness we include a proof in this paper's notation.

Recall that \(N_X(q) = \frac{1}{\#X} \sum_{x' \in X} q^{d(x,x')}\) for any choice of \(x \in X\).

\begin{theorem}\label{thm:fmag_hom_join}
    Let \(X\) and \(Y\) be finite, nonempty homogeneous metric spaces of diameter at most 2. Then the formal magnitude of \(X \ast Y\) is given by
    \[\Mag(X \ast Y)(q) = \frac{N_X(q) + N_Y(q) - 2q}{N_X(q)N_Y(q) - q^2}.\]
\end{theorem}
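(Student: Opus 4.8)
The plan is to compute the formal magnitude through a \emph{weighting}. Since \(\det Z_{X*Y}^{\mathrm{form}}\) has constant term \(1\), it is invertible in \(\mathbb{Q}(q^{\mathbb{R}})\), so there is a unique vector \(w \in \mathbb{Q}(q^{\mathbb{R}})^{X \sqcup Y}\) with \(Z_{X*Y}^{\mathrm{form}} w = \mathbf{1}\), namely \(w = (Z_{X*Y}^{\mathrm{form}})^{-1}\mathbf{1}\), where \(\mathbf{1}\) is the all-ones vector. For any matrix \(M\) one has \(\mathbf{1}^{\top} M \mathbf{1} = \sumofentry(M)\), so
\[
\Mag(X*Y) = \sumofentry\bigl((Z_{X*Y}^{\mathrm{form}})^{-1}\bigr) = \mathbf{1}^{\top}(Z_{X*Y}^{\mathrm{form}})^{-1}\mathbf{1} = \mathbf{1}^{\top} w = \sum_{p} w(p).
\]
Thus it suffices to find \(w\) and sum its entries.

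Next I would exploit homogeneity to collapse the weighting equations to a \(2 \times 2\) system. Writing \(Z_{X*Y}^{\mathrm{form}}\) in block form as
\[
\begin{pmatrix} Z_X^{\mathrm{form}} & q\,J \\ q\,J^{\top} & Z_Y^{\mathrm{form}} \end{pmatrix},
\]
where \(J\) is the \(\#X \times \#Y\) all-ones matrix (the off-diagonal blocks are filled with \(q^1 = q\), since every cross-distance equals \(1\)), homogeneity gives the eigenvector relations \(Z_X^{\mathrm{form}}\mathbf{1}_X = \#X \cdot N_X(q)\,\mathbf{1}_X\) and \(Z_Y^{\mathrm{form}}\mathbf{1}_Y = \#Y \cdot N_Y(q)\,\mathbf{1}_Y\). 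These make the constant-on-each-part ansatz \(w = (\alpha\,\mathbf{1}_X,\ \beta\,\mathbf{1}_Y)\) consistent: substituting into \(Z_{X*Y}^{\mathrm{form}} w = \mathbf{1}\) and setting \(A = \#X \cdot \alpha\), \(B = \#Y \cdot \beta\) reduces everything to
\[
N_X(q)\,A + q\,B = 1, \qquad q\,A + N_Y(q)\,B = 1.
\]

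I would then solve this by Cramer's rule. Its determinant is \(N_X(q)N_Y(q) - q^2\), which is a nonzero element of \(\mathbb{Q}(q^{\mathbb{R}})\) because at \(q = 0\) it equals \(N_X(0)N_Y(0) = \tfrac{1}{\#X\,\#Y} \neq 0\) (each \(N(q)\) has constant term the reciprocal of the cardinality). Hence
\[
A = \frac{N_Y(q) - q}{N_X(q)N_Y(q) - q^2}, \qquad B = \frac{N_X(q) - q}{N_X(q)N_Y(q) - q^2},
\]
and since \(Z_{X*Y}^{\mathrm{form}}\) is invertible the vector we have exhibited \emph{is} the unique weighting, so \(\Mag(X*Y) = A + B\), which simplifies to the stated formula.

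The only genuinely delicate point is the passage from the full \((\#X + \#Y)\)-dimensional weighting equations to the \(2 \times 2\) system: one must verify that a vector constant on each part really can satisfy \(Z^{\mathrm{form}} w = \mathbf{1}\), and this is exactly what the eigenvector relations above guarantee; uniqueness of the weighting then promotes this particular solution to \emph{the} weighting. Everything else is routine linear algebra, and the check that the denominator does not vanish is immediate from evaluation at \(q = 0\).
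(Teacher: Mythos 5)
Your proposal is correct and follows essentially the same route as the paper: both compute the unique formal weighting, use homogeneity to justify a constant-on-each-part ansatz reducing the problem to the same \(2 \times 2\) system in \(A = \#X\,\alpha\) and \(B = \#Y\,\beta\), solve it, and sum. Your explicit check that the denominator \(N_X(q)N_Y(q) - q^2\) is nonzero (via evaluation at \(q=0\)) is a small point the paper leaves implicit, but the argument is the same.
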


\begin{proof}
    Let \(X\) be a finite metric space. A vector \(\vec{w} \in \mathbb{Q}(q^{\mathbb{R}})^X\) is called a \demph{formal weighting} on \(X\) if \((1, \ldots, 1) = Z_X^{\mathrm{form}} \vec{w}\). Since \(Z_X^\mathrm{form}\) is always invertible, every finite space carries a unique formal weighting \(\vec{w}\), and we have \(\Mag(X)(q) = \sum_{x \in X} w_x\).

    Since \(X\) and \(Y\) are homogeneous, to find the formal weighting on the space \(X \ast Y\) it suffices to find a solution \((\alpha, \beta)\) to the system
    \begin{equation}\label{eq:wt_reduced}
    \begin{pmatrix}
    1\\
    1
    \end{pmatrix}
    =
    \begin{pmatrix}
    N_X(q) & q\\
    q & N_Y(q)
    \end{pmatrix}
    \begin{pmatrix}
    \#X \alpha\\
    \#Y \beta
    \end{pmatrix}.
    \end{equation}
    The formal weighting \(\vec{w}\) is then given by
    \[w_v = \begin{cases} \alpha & v \in X \\ \beta & v \in Y,\end{cases}\]
    so \(\Mag(X \ast Y)(q) = \#X\alpha + \#Y\beta\). Solving \eqref{eq:wt_reduced} yields
    \[\#X \alpha = \frac{N_Y(q) - q}{N_X(q)N_Y(q) - q^2} \; \text{ and } \; \#Y \beta = \frac{N_X(q) - q}{N_X(q)N_Y(q) - q^2}\]
    and summing these gives the result.
\end{proof}

As a first application, we exhibit a five-point metric space without the one-point property. Since every four-point space has the one-point property, this is a `smallest' space for which the property fails.

\begin{example}[A five-point space without the one-point property]
\label{eg:5_pt}
Let \(X\) be the space with two points separated by distance \(\frac{4}{3}\), and \(Y\) the space with three points  separated pairwise by distance \(2\). Their join \(X \ast Y\) is shown in \Cref{fig:1st_example}. We have $N_X(q)=\frac{1+q^{4/3}}{2}$ and $N_Y(q)=\frac{1+2q^2}{3}$, so \Cref{thm:fmag_hom_join} gives the formula
\[
\begin{split}
\Mag(X \ast Y)(q)
&=
\frac{5-12q+3q^{4/3}+4q^2}{(1+q^{4/3})(1+2q^2)-6q^2}\\
&=
\frac{(1-q^{1/3})^2(5+10q^{1/3}+15q^{2/3}+8q+4q^{4/3})}{(1-q^{1/3})^2(1+q^{1/3})^2(1+2q^{2/3}+4q^{4/3}+2q^2)}.
\end{split}
\]
It follows that
\[
\lim_{t \to 0} |t(X \ast Y)| = \lim_{q\to 1}\Mag(X \ast Y)(q)=\frac{7}{6}. 
\]
\end{example}

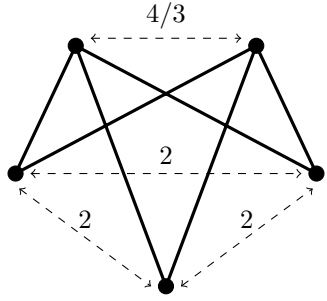
\begin{figure}[htbp]
\centering
\begin{tikzpicture}

\coordinate (A1) at (.3,1.7); 
\coordinate (A2) at (2.7,1.7); 
\coordinate (B1) at (-.5,0); 
\coordinate (B2) at (1.5,-1.5); 
\coordinate (B3) at (3.5,0); 

\draw [very thick] (A1)--(B1)--(A2)--(B2)--(A1)--(B3)--(A2);
\draw[<->, dashed] (.5,1.8) -- node[above] {$4/3$} (2.5,1.8);
\draw[<->, dashed] (-.45,-.2) -- node[above] {$2$} (1.3,-1.5);
\draw[<->, dashed] (1.7,-1.5) -- node[above] {$2$} (3.45,-.2);
\draw[<->, dashed] (-.3,0) -- node[above] {$2$} (3.3,0);
\filldraw[draw=black, fill=black] (A1) circle [radius=0.1]; 
\filldraw[draw=black, fill=black] (A2) circle [radius=0.1]; 
\filldraw[draw=black, fill=black] (B1) circle [radius=0.1]; 
\filldraw[draw=black, fill=black] (B2) circle [radius=0.1]; 
\filldraw[draw=black, fill=black] (B3) circle [radius=0.1]; 

\end{tikzpicture}
\caption{A five-point space without the one-point property. Solid black lines represent distances equal to 1.}
\label{fig:1st_example}
\end{figure}

Our objective is to prove that the small-scale limit of the magnitude function can take any real value greater than 1, and we will achieve this using joins of homogeneous spaces. Many such joins have the one-point property: for instance, the property holds for all complete bipartite graphs. So, in order to find a family for which the small-scale limit can be greater than 1, we impose an additional condition on the spaces involved. That condition---equation \eqref{eq:assumption} below---ensures that the distance matrix of \(X \ast Y\) satisfies the relation $F_n(d_{ij})=0$ which is necessary in order for the one-point property to fail (\Cref{rmk:Fn}).

\begin{theorem}\label{thm:mag1_finite}
Let \(X = \{x_1, \ldots, x_n\}\) and \(Y = \{y_1, \ldots, y_m\}\) be homogeneous metric spaces of diameter at most 2. Suppose that 
\begin{equation}\label{eq:assumption}
    \frac{1}{n} \sum_{i=1}^n d(x_1,x_i) + \frac{1}{m} \sum_{i=1}^m d(y_1,y_i) = 2.
\end{equation}
Then 
\[
\label{eq:explicit_lim}
\lim_{t\to 0} |t (X \ast Y)| = 
\frac
{\frac{1}{n} \sum_{i=1}^n \left( d(x_1,x_i)^2 - d(x_1,x_i) \right) + \frac{1}{m} \sum_{i=1}^m \left( d(y_1,y_i)^2 - d(y_1,y_i) \right)}
{\frac{1}{n^2} \sum_{i < j} (d(x_1,x_i) - d(x_1,x_j))^2 + \frac{1}{m^2} \sum_{i < j} (d(y_1,y_i) - d(y_1,y_j))^2}.
\]
Moreover, we have 
\(1 \leq \lim_{t\to 0} |t (X \ast Y)| < \infty\).
\end{theorem}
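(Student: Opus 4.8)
The plan is to substitute the explicit formula from \Cref{thm:fmag_hom_join} into \Cref{rmk:mag_formal_mag} and evaluate the resulting limit as \(q \to 1\) by a second-order expansion. Write \(a_i = d(x_1,x_i)\) and \(b_i = d(y_1,y_i)\), so that \(N_X(q) = \frac{1}{n}\sum_i q^{a_i}\) and \(N_Y(q) = \frac{1}{m}\sum_i q^{b_i}\), with \(a_1 = b_1 = 0\). Since \(\lim_{t\to 0}|t(X\ast Y)| = \lim_{q\to 1}\Mag(X \ast Y)(q)\), I would study the numerator \(P(q) = N_X(q) + N_Y(q) - 2q\) and denominator \(D(q) = N_X(q)N_Y(q) - q^2\) near \(q = 1\). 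Because \(N_X(1) = N_Y(1) = 1\), both \(P(1)\) and \(D(1)\) vanish. Setting \(\mu_X = N_X'(1) = \frac{1}{n}\sum_i a_i\) and \(\mu_Y = N_Y'(1) = \frac{1}{m}\sum_i b_i\), one computes \(P'(1) = D'(1) = \mu_X + \mu_Y - 2\), which is also zero by hypothesis \eqref{eq:assumption}. So the ratio is a genuine \(0/0\) form even after one differentiation, and I would apply L'Hôpital's rule twice, obtaining \(\lim_{q\to 1}\Mag(X\ast Y)(q) = P''(1)/D''(1)\) (valid since, as the argument below shows, \(D''(1) \neq 0\)).

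The second step is to identify \(P''(1)\) and \(D''(1)\) with the two halves of the claimed formula. Differentiating twice and using \(N_X''(1) = \frac{1}{n}\sum_i(a_i^2 - a_i)\) and \(N_Y''(1) = \frac{1}{m}\sum_i(b_i^2 - b_i)\) gives \(P''(1) = N_X''(1) + N_Y''(1)\) directly, which is the claimed numerator. For the denominator one finds \(D''(1) = N_X''(1) + N_Y''(1) + 2\mu_X\mu_Y - 2\); the only slightly delicate point is to rewrite this as the sum of variances \(\bigl(\frac{1}{n}\sum_i a_i^2 - \mu_X^2\bigr) + \bigl(\frac{1}{m}\sum_i b_i^2 - \mu_Y^2\bigr)\) that appears in the statement. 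The difference between these two expressions is exactly \((\mu_X+\mu_Y)^2 - 4\), which vanishes precisely because of \eqref{eq:assumption}; and the identity \(\frac{1}{n^2}\sum_{i<j}(a_i-a_j)^2 = \frac{1}{n}\sum_i a_i^2 - \mu_X^2\) is the standard one relating pairwise squared differences to variance. This matches the displayed formula.

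For the \textbf{moreover} clause I would reason as follows. First, \eqref{eq:assumption} together with the bound \(\mu_X = \frac{1}{n}\sum_{i\geq 2} a_i \leq \frac{2(n-1)}{n} < 2\) (using diameter \(\leq 2\) and \(a_1 = 0\)), and the symmetric bound for \(\mu_Y\), forces \(\mu_X,\mu_Y \in (0,2)\). In particular both \(X\) and \(Y\) have at least two points, so within each space the distances are not all equal and each variance is strictly positive; hence \(D''(1) > 0\), which both justifies the double application of L'Hôpital above and shows the limit is finite. For the lower bound, since \(D''(1) > 0\) it suffices to show that the numerator \(P''(1)\) is at least the denominator \(D''(1)\). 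Having identified both with the variance expressions, their difference is \(\mu_X^2 + \mu_Y^2 - 2\), and under the constraint \(\mu_X + \mu_Y = 2\) this is nonnegative by convexity (or Cauchy--Schwarz): \(\mu_X^2 + \mu_Y^2 \geq \tfrac{1}{2}(\mu_X + \mu_Y)^2 = 2\), with equality iff \(\mu_X = \mu_Y = 1\). Thus \(\lim_{t\to 0}|t(X\ast Y)| = P''(1)/D''(1) \geq 1\).

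The computation is elementary throughout; the one place demanding care is the bookkeeping that makes \emph{both} first derivatives vanish, so that a single application of L'Hôpital is insufficient and the genuinely informative data live at second order. I expect the main conceptual (rather than technical) obstacle to be recognizing that the denominator is a sum of variances of the distance distributions: this observation both streamlines the algebra via \eqref{eq:assumption} and immediately yields strict positivity, once one notes that the hypothesis rules out one-point factors.
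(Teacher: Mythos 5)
Your proposal is correct and follows essentially the same route as the paper: both start from \Cref{thm:fmag_hom_join}, apply L'H\^opital's rule twice using hypothesis \eqref{eq:assumption} to kill the first-order terms, identify the second derivative of the denominator with a sum of variances of the distance distributions (the paper packages the variance identity as an instance of Lagrange's identity), and obtain the lower bound from the same observation that the numerator exceeds the denominator by \(\tfrac12(N_X'(1)-N_Y'(1))^2 \geq 0\). The only quibble is a harmless algebraic slip: the general difference between \(D''(1)\) and the sum of variances is \((\mu_X+\mu_Y)^2-(\mu_X+\mu_Y)-2\) rather than \((\mu_X+\mu_Y)^2-4\), though both expressions vanish under \eqref{eq:assumption}, so the conclusion stands.
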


Note that the homogeneity of \(X\) and \(Y\) means the sums in the statement of \Cref{thm:mag1_finite} do not depend on the labelling of the points in either space.

In what follows, we denote \(\frac{\d}{\d q} N_X(q)\) by \(N_X'(q)\) and \(\frac{\d^2}{\d q^2} N_X(q)\) by  \(N_X''(q)\). The proof of \Cref{thm:mag1_finite} makes use of two lemmas, the first of which is immediate from the definition of \(N_X\).

\begin{lemma}\label{lem:derivatives}
    For any finite homogeneous space \(X\) we have 
    \begin{align*}
        & N_X(1) = 1, \\
        & N_X'(1) = \frac{1}{\# X} \sum_{x' \in X} d(x,x'), \\
        & N_X''(1)= \frac{1}{\# X} \sum_{x' \in X} (d(x,x')^2 - d(x,x')). \tag*{\qed}
    \end{align*}
\end{lemma}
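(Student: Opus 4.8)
The plan is to differentiate the defining expression for \(N_X\) directly, treating \(q\) as a positive real variable. Fix a base point \(x \in X\); by the homogeneity of \(X\) the resulting expressions will be independent of this choice, so it suffices to verify the three identities for one fixed \(x\). The only structural input needed is that, for each \(x' \in X\), the summand \(q^{d(x,x')}\) is a smooth function of \(q\) on \((0,\infty)\), with \(\frac{\d}{\d q} q^{r} = r\, q^{r-1}\) and \(\frac{\d^2}{\d q^2} q^{r} = r(r-1)\, q^{r-2}\) for the fixed nonnegative real exponent \(r = d(x,x')\). Since
\[
N_X(q) = \frac{1}{\#X} \sum_{x' \in X} q^{d(x,x')}
\]
is a \emph{finite} sum, term-by-term differentiation is legitimate and gives
\[
N_X'(q) = \frac{1}{\#X} \sum_{x' \in X} d(x,x')\, q^{d(x,x')-1}, \qquad
N_X''(q) = \frac{1}{\#X} \sum_{x' \in X} d(x,x')\bigl(d(x,x')-1\bigr)\, q^{d(x,x')-2}.
\]

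It then remains only to evaluate at \(q = 1\). Because \(1^{r} = 1\) for every exponent \(r \geq 0\), we get \(N_X(1) = \frac{1}{\#X} \sum_{x' \in X} 1 = 1\); substituting \(q = 1\) into the first derivative gives \(N_X'(1) = \frac{1}{\#X} \sum_{x' \in X} d(x,x')\); and substituting into the second gives \(N_X''(1) = \frac{1}{\#X} \sum_{x' \in X} d(x,x')(d(x,x')-1) = \frac{1}{\#X} \sum_{x' \in X} \bigl(d(x,x')^2 - d(x,x')\bigr)\). These are exactly the three claimed identities. (Note that the diagonal term \(x' = x\) contributes \(q^0 = 1\), a constant, and so adds \(1\) to \(N_X(1)\) but \(0\) to each derivative, consistent with the summand \(d(x,x')^2 - d(x,x') = 0\) at \(x' = x\).)

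There is no real obstacle: the lemma is immediate once one observes that \(N_X\) is a finite generalized polynomial and that each monomial \(q^r\) is differentiable at \(q = 1\). The single point worth a word of care is that we differentiate at \(q = 1 > 0\) rather than at \(q = 0\), so even the monomials with \(r < 2\)—whose derivatives involve negative powers \(q^{r-2}\)—present no singularity, and differentiation under the finite sum is justified without any convergence argument.
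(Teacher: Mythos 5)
Your proof is correct and is precisely the computation the paper has in mind: the paper states this lemma as immediate from the definition of \(N_X\) (marking it \(\square\) with no written proof), meaning exactly the term-by-term differentiation of the finite sum \(\frac{1}{\#X}\sum_{x' \in X} q^{d(x,x')}\) and evaluation at \(q=1\) that you carry out. Your added remarks on the diagonal term and the harmlessness of exponents \(r<2\) at \(q=1\) are sensible but do not change the substance.
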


The second lemma is an application of \demph{Lagrange's identity}, which says that every pair of vectors \(\vec{u}, \vec{v} \in \mathbb{R}^n\) satisfies
\[\left( \sum_{i = 1}^n u_i^2 \right) \left( \sum_{i = 1}^n v_i^2 \right) - \left( \sum_{i = 1}^n u_i v_i \right)^2 = \sum_{1 \leq i < j \leq n} (u_i v_j - u_j v_i)^2.\]

\begin{lemma}\label{lem:by_lagrange}
For any homogeneous metric space \(X = \{x_1, \ldots, x_n\}\) we have
\begin{equation}\label{eq:lagrange}
N_X''(1) + N_X'(1) - N_X'(1)^2 = \frac{1}{n^2} \sum_{1 \leq i < j \leq n} (d(x_1,x_i) - d(x_1,x_j))^2.
\end{equation}
If \(n > 1\) then this value is strictly positive.
\end{lemma}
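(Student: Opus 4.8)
The plan is to collapse the left-hand side of \eqref{eq:lagrange} into the single difference $\tfrac{1}{n}\sum_i d(x_1,x_i)^2 - \bigl(\tfrac{1}{n}\sum_i d(x_1,x_i)\bigr)^2$, and then recognize this as a direct instance of Lagrange's identity with one of the two vectors taken to be the all-ones vector. Writing $d_i := d(x_1, x_i)$ for brevity, the first step is simply to substitute the three formulas of \Cref{lem:derivatives}. Since $N_X''(1) = \tfrac{1}{n}\sum_i (d_i^2 - d_i)$ and $N_X'(1) = \tfrac{1}{n}\sum_i d_i$, the linear-in-$d_i$ contributions cancel when these are added, leaving
\[
N_X''(1) + N_X'(1) = \frac{1}{n}\sum_{i=1}^n d_i^2.
\]
Subtracting $N_X'(1)^2 = \bigl(\tfrac{1}{n}\sum_i d_i\bigr)^2$ then rewrites the whole left-hand side as $\tfrac{1}{n^2}\bigl(n\sum_i d_i^2 - (\sum_i d_i)^2\bigr)$.

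The second step is to apply Lagrange's identity to the pair $\vec{u} = (d_1, \ldots, d_n)$ and $\vec{v} = (1, \ldots, 1)$. With this choice one has $\sum_i u_i^2 = \sum_i d_i^2$, $\sum_i v_i^2 = n$, $\sum_i u_i v_i = \sum_i d_i$, and crucially $u_i v_j - u_j v_i = d_i - d_j$, so the identity reads $n\sum_i d_i^2 - (\sum_i d_i)^2 = \sum_{i<j}(d_i - d_j)^2$. Dividing through by $n^2$ and combining with the first step gives \eqref{eq:lagrange} exactly.

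For the positivity claim when $n > 1$, the right-hand side is manifestly a nonnegative sum of squares, so it remains only to exhibit a single strictly positive term. Here I would invoke the fact that $d_1 = d(x_1, x_1) = 0$, whereas $d_i = d(x_1, x_i) > 0$ for every $i \neq 1$, since distinct points of a metric space lie at positive distance. Hence whenever $n > 1$ the term $(d_1 - d_i)^2 = d_i^2 > 0$ occurs in the sum, forcing the total to be strictly positive.

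In truth there is no serious obstacle here: the statement of Lagrange's identity immediately preceding the lemma already signposts the whole argument, and the computation is routine. The only two points that warrant a moment's care are the cancellation of the linear terms in the first step (which is what makes the answer come out as a clean sum of squares rather than something involving $N_X'(1)$ separately) and, for the strictness, the observation that homogeneity does \emph{not} make all the $d_i$ equal—precisely because one of them is the zero self-distance $d_1$.
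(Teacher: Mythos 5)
Your proof is correct and follows essentially the same route as the paper's: substitute the formulas from \Cref{lem:derivatives}, observe the cancellation of the linear terms, apply Lagrange's identity with the all-ones vector paired against $(d_1,\ldots,d_n)$, and use $d_1 = 0 < d_j$ for the strict positivity. There is nothing to add or correct.
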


\begin{proof}
For each \(i \in \{1,\ldots, n\}\) let \(d_i = d(x_1,x_i)\). By \Cref{lem:derivatives}, the left hand side of (\ref{eq:lagrange}) is
\begin{align*}
\frac{1}{n} \sum_{i=1}^n (d_i^2 - d_i) + \frac{1}{n} \sum_{i=1}^n d_i - \left(\frac{1}{n} \sum_{i=1}^n d_i\right)^2 &= \frac{1}{n^2} \left( n \sum_{i=1}^n d_i^2 - \left( \sum_{i=1}^n d_i\right)^2 \right) \\
&= \frac{1}{n^2} \left( \left(\sum_{i=1}^n 1^2\right) \left(\sum_{i=1}^n d_i^2\right) - \left( \sum_{i=1}^n 1 \cdot d_i \right)^2 \right) \\
&= \frac{1}{n^2} \sum_{1 \leq i < j \leq n} (d_j - d_i)^2
\end{align*}
where in the third line we use Lagrange's identity. To see that this value is strictly positive when \(n > 1\), we just need to see that for some \(i < j\) the term \(d_j - d_i\) is non-zero. Take \(i = 1\) and any \(j > 1\): then \(d_1 = d(x_1,x_1) = 0\) while \(d_j = d(x_1, x_j) \neq 0\), so \(d_j - d_1 > 0\).
\end{proof}

\begin{proof}[Proof of \Cref{thm:mag1_finite}]
\Cref{thm:fmag_hom_join} tells us that
\begin{equation}\label{eq:Mag(1)_join_1}
    \lim_{t \to 0} |t(X \ast Y)| = \lim_{q \to 1} \frac{N_X(q) + N_Y(q) - 2q}{N_X(q)N_Y(q) - q^2}
\end{equation}
if the limit on the right exists. Since \(N_X(1) = 1 = N_Y(1)\), both denominator and numerator in \eqref{eq:Mag(1)_join_1} converge to $0$ as $q\to 1$. Applying L'H\^opital's rule yields
\begin{equation}\label{eq:Mag(1)_join_2}
\lim_{t\to 0}|t(X*Y)|=
\lim_{q\to 1}
\frac{N'_X(q) + N'_Y(q) -2}{N'_X(q) N_Y(q)+N_X(q)N'_Y(q)-2q}, 
\end{equation}
provided this limit exists. Assumption \eqref{eq:assumption} says that \(N_X'(1) + N_Y'(1) = 2\), which ensures that both the denominator and numerator in \eqref{eq:Mag(1)_join_2} again go to 0 as \(q \to 1\), so we apply L'H\^opital's rule a second time to see that
\begin{equation}\label{eq:Mag(1)_join_3}
\lim_{t\to 0}|t(X*Y)| = \lim_{q\to 1} \frac{N''_X(q) + N''_Y(q)}{N''_X(q)N_Y(q)+2N'_X(q)N'_Y(q)+N_X(q)N''_Y(q)-2}
\end{equation}
provided \emph{this} limit exists.

By \Cref{lem:derivatives}, as \(q \to 1\) the numerator in \eqref{eq:Mag(1)_join_3} converges to
\[\frac{1}{n} \sum_{i=1}^n \left( d(x_1,x_i)^2 - d(x_1,x_i) \right) + \frac{1}{m} \sum_{i=1}^m \left( d(y_1,y_i)^2 - d(y_1,y_i) \right).\]
Meanwhile, since \(N_Y(1) = 1 = N_X(1)\), the denominator converges to
\[N''_X(1)+2N'_X(1)N'_Y(1)+N''_Y(1)-2,\]
which, by assumption \eqref{eq:assumption}, is equal to
\begin{align*}
& N_X''(1)+N_X'(1)+
N_Y''(1)+N_Y'(1)+2N_X'(1)N_Y'(1)-4\\
= \:
& N_X''(1)+N_X'(1)+
N_Y''(1)+N_Y'(1)+2N_X'(1)N_Y'(1)-(N_X'(1)+N_Y'(1))^2\\
= \:
& N_X''(1)+N_X'(1)-N_X'(1)^2+
N_Y''(1)+N_Y'(1)-N_Y'(1)^2\\
= \:
& \frac{1}{n^2}\sum_{1\leq i<j\leq n} (d(x_1, x_i)-d(x_1, x_j))^2+
\frac{1}{m^2}\sum_{1\leq i<j\leq m} (d(y_1, y_i)-d(y_1, y_j))^2
\end{align*}
where in the final line we use \Cref{lem:by_lagrange}. Assumption \eqref{eq:assumption} also ensures that at least one of \(n\) and \(m\) is greater than 1, so the same lemma tells us that the limiting value of the denominator is strictly positive. It follows that the limit on the right of \eqref{eq:Mag(1)_join_3} does exist, giving the explicit formula
\[\lim_{t\to 0}|t(X*Y)| = \frac
{\frac{1}{n} \sum_{i=1}^n \left( d(x_1,x_i)^2 - d(x_1,x_i) \right) + \frac{1}{m} \sum_{i=1}^m \left( d(y_1,y_i)^2 - d(y_1,y_i) \right)}
{\frac{1}{n^2} \sum_{i < j} (d(x_1,x_i) - d(x_1,x_j))^2 + \frac{1}{m^2} \sum_{i < j} (d(y_1,y_i) - d(y_1,y_j))^2}.\]

Finally, using the fact that $2=\frac{1}{2}(N'_X(1)+N'_Y(1))^2$, we have, from \eqref{eq:Mag(1)_join_3}, that
\begin{align}
    \label{eq:limitformula}
    \lim_{t\to 0}|t(X*Y)| &= \frac{N_X''(1) + N_Y''(1)}{N_X''(1) + N_Y''(1) - \frac{1}{2}(N_X'(1) - N_Y'(1))^2}
\end{align}
from which we see that the numerator is not less than the denominator. Hence, $1\leq \lim_{t\to 0}|t(X*Y)|<\infty$. \end{proof}

From \Cref{thm:mag1_finite} we can derive our final result.

\begin{theorem}\label{thm:arbitrary_values}
    For every real number \(R \geq 1\) there exists a finite metric space \(X\) such that \(\lim_{t \to 0} |tX| = R\).
\end{theorem}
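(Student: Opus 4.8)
The plan is to realize every target value using joins of the simplest possible homogeneous spaces and to read off the limit from \Cref{thm:mag1_finite}. For $s \in (0,2]$ and an integer $n \geq 2$, let $X_s^n$ denote the $n$-point homogeneous space in which every nonzero distance equals $s$; this has diameter $s \leq 2$, so the join $X_s^n \ast X_u^n$ is a legitimate metric space. I would take $X = X_s^n$ and $Y = X_u^n$ with the \emph{same} number of points $n$ in each factor, and choose $s,u$ subject to
\[s + u = \frac{2n}{n-1}.\]
Since $\frac{1}{n}\sum_{i} d(x_1,x_i) = \frac{(n-1)s}{n}$ for $X_s^n$, this is precisely assumption \eqref{eq:assumption} for the pair $(X,Y)$. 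It leaves a single continuous degree of freedom, say $s$, with $u = \frac{2n}{n-1} - s$ determined.

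Next I would feed this family into \Cref{thm:mag1_finite}. The distances from a basepoint of $X_s^n$ are $0$ together with $n-1$ copies of $s$, so the numerator and denominator in the theorem's formula evaluate to $\frac{n-1}{n}\bigl(s^2+u^2-\frac{2n}{n-1}\bigr)$ and $\frac{n-1}{n^2}(s^2+u^2)$ respectively, giving the explicit value
\[\lim_{t\to 0}\bigl|t(X_s^n \ast X_u^n)\bigr| = n - \frac{2n^2}{(n-1)(s^2+u^2)},\]
a quantity I will denote $R_n(s)$. Writing $g(s) = s^2 + u^2 = s^2 + \bigl(\tfrac{2n}{n-1}-s\bigr)^2$, the quadratic $g$ attains its minimum at the symmetric point $s = u = \tfrac{n}{n-1}$ and is strictly increasing on $\bigl[\tfrac{n}{n-1},\,2\bigr]$; hence $R_n$ is continuous and strictly increasing there. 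A direct check gives $R_n\bigl(\tfrac{n}{n-1}\bigr) = 1$ (the symmetric join, which has the one-point property) and
\[R_n(2) = n - \frac{n^2(n-1)}{2\bigl[(n-1)^2+1\bigr]},\]
a quantity I will denote $R_n^{\max}$.

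The conclusion then follows from two observations. First, by the intermediate value theorem $R_n$ takes every value in $[1, R_n^{\max}]$ as $s$ ranges over $\bigl[\tfrac{n}{n-1},\,2\bigr]$; throughout this range one has $s,u \in (0,2]$, so every member of the family is a genuine finite metric space of diameter at most $2$, to which \Cref{thm:mag1_finite} applies. Second, $R_n^{\max} \sim n/2 \to \infty$ as $n \to \infty$. Thus, given any real $R \geq 1$, I would fix $n$ with $R_n^{\max} \geq R$ and then choose $s$ with $R_n(s) = R$, producing a finite space $X_s^n \ast X_u^n$ whose small-scale limit is exactly $R$.

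The substantive content is already packaged in \Cref{thm:mag1_finite}; the only real design decision is the choice of family, and the single thing that must be arranged is that it \emph{simultaneously} (i) varies continuously, so that the intermediate value theorem applies, and (ii) is unbounded in the value of $R$. Taking equal numbers of points in the two factors is what makes both the constraint \eqref{eq:assumption} and the resulting formula for $R_n(s)$ clean, reducing everything to the elementary monotonicity of the quadratic $g$. No individual step is a serious obstacle once the family is fixed; the only points demanding care are confirming that the parameters stay inside the admissible range $(0,2]$ and that $R_n^{\max}$ genuinely diverges, which is why the construction must also vary the integer $n$ rather than relying on a single continuous parameter.
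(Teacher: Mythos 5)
Your proposal is correct and follows essentially the same route as the paper: the same family of joins $X_s^n \ast X_u^n$ with $s+u = \tfrac{2n}{n-1}$, the same interval $[\tfrac{n}{n-1},2]$ for the free parameter, the intermediate value theorem on the resulting rational function of $s$, and unboundedness of the endpoint value $R_n(2)\sim n/2$ as $n\to\infty$ (your closed form for $R_n(2)$ agrees with the paper's \eqref{eq:up_bd}). The only cosmetic difference is that you additionally note strict monotonicity of $R_n$ via the quadratic $g$, which the paper does not need.
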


\begin{proof}
For each natural number \(n > 1\) and real number \(r > 0\), let \(X_r^n\) denote the \(n\)-point metric space with \(d(x,x') = r\) for all \(x \neq x'\). Let \(s(n,r) = \frac{2n}{n-1} - r\). Then for \(r \in [2/(n-1), 2]\), both \(X_r^n\) and \(X_{s(n,r)}^n\) are homogeneous spaces of diameter at most 2, and the join \(X_r^n \ast X_{s(n,r)}^n\) satisfies the conditions of \Cref{thm:mag1_finite}.

For fixed \(n > 1\), we can compute \(\lim_{t \to 0}|t(X_r^n \ast X_{s(n,r)}^n)|\) as a function of \(r\). Explicitly, by \Cref{thm:mag1_finite} we have
\begin{align*}
\lim_{t \to 0}|t(X_r^n \ast X_{s(n,r)}^n)| &= \frac{\frac{n-1}{n}(r^2 - r) + \frac{n-1}{n} \left(\left(\frac{2n}{n-1} - r\right)^2 + r \right) - 2}{\frac{n-1}{n^2}r^2 + \frac{n-1}{n^2}\left(\frac{2n}{n-1} - r\right)^2}. \label{eq:mag1_rational}
\end{align*}
This is a rational function in \(r\), and is non-singular for \(r \in [2/(n-1), 2]\). In particular, it is continuous on the subinterval \([n/(n-1), 2]\). At the lower bound of this interval, taking \(r = n/(n-1)\) gives
\[\lim_{t \to 0}|t(X_r^n \ast X_{s(n,r)}^n)| = 1\]
while, at the upper bound, taking $r=2$ gives
\begin{equation}\label{eq:up_bd}
    \lim_{t \to 0}|t(X_2^n \ast X_{s(n,2)}^n)| = \frac{\frac{1}{2} n^{3} - \frac{3}{2} n^{2} + 2 n}{n^{2} - 2 n + 2}.
\end{equation}
Since the limit is a continuous function of \(r \in [n/(n-1), 2]\), the intermediate value theorem implies it must take every value in the range \([1, \lim_{t \to 0}|t(X_2^n \ast X_{s(n,2)}^n)|]\).

Now, take any real number \(R \geq 1\). For large \(n\) the quotient in \eqref{eq:up_bd} is close to \(n/2\), so for \(N \gg 2R\) we have \(\lim_{t \to 0}|t(X_2^N \ast X_{s(n,2)}^N)| > R\). Hence, for some \(r \in [N/(N-1), 2]\) we must have \(\lim_{t \to 0}|t(X_r^N \ast X_{s(N,r)}^N)| = R\).
\end{proof}

\begin{remark}
    Willerton's example belongs to the family of spaces constructed in the proof of \Cref{thm:arbitrary_values}: it is the join \(X_2^3 \ast X_1^3\). That particular space is of negative type; we do not know whether this holds for all members of the family.
\end{remark}

\bibliographystyle{amsplain}

\bibliography{RY_1-point.bib}

\end{document}